\newtheorem{theorem}{Theorem}
\newtheorem*{remark}{Remark} 
\newcommand{\Cbb}{\mathbb{R}}
\newcommand{\Lcdots}{\genfrac{}{}{0pt}{0}{}{ \cdots}}
\newcommand{\Plus}{\genfrac{}{}{0pt}{0}{ } {+}\;}
\newcommand{\Rc}{\mathcal{R}}
\newcommand{\Zc}{\mathbf{R}}
\begin{document}
\pagestyle{empty}

\noindent
\begin{center}
\textbf{\Large Interpolation of functional by integral continued C-fractions}\\
Volodymyr L. Makarov\\ 
Institute of Mathematics of the NAS of  Ukraine, Kiev, Ukraine\\ 
\textit{E-mail:} makarovimath@gmail.com\\
and \\
Mykhaylo M. Pahirya\\
State University of Mukachevo, Ukraine \\ 
\textit{E-mail:} pahirya@gmail.com\\ 

January  22, 2018 
\end{center} 
\noindent
\textbf{Abstract.}
The functional interpolation problem on a  continual  set of nodes by an integral continued  C-fraction is studied. The necessary and sufficient conditions for its solvability are found. As a particular case, the considered integral continued  fraction contains a standard interpolation continued C-fraction which is used to approximate the functions of one variable.
 
 \noindent
\textbf{Keywords: } continuity  nodes, integral continued  C–fraction, interpolation of functional 

\noindent
\textbf{AMS subject classification:} 30B70, 41A20, 65D05, 65D15

\section{Introduction} 
Recently, the considerable amount of scientific studies were devoted to the generalization of  interpolation theory of  functions of real (complex) variable to the case of functionals and operators in abstract spaces, see the monographs \cite{MakHlob, MakHlobYan} for example.
Continued fractions \cite {Jons} and branched continued fractions introduced by Skorobohat'ko V.Ya. \cite{Skor} were generalized by integral continued fractions which proposed by Syvavko M.S. \cite{Syv}.
The problem of interpolation with integral continued  fractions was first considered in the article  \cite{Myh},
further expansions and generalizations of this work are contained in the paper \cite {MakHlobMyh}. 
 
 Another class of interpolation integral continued fractions has been investigated in the paper \cite{MakDem1}. 
 This class differs from the previously studied integral continued fractions by the fact that \(n\)--s floor of fraction containing \(n\)--tuple integral.
 Interpolation integral operator continued fractions in Banach spaces were investigated in the article \cite{MakHlobDem}.
 The natural  generalization of the classical Thiele continued fraction  to the interpolation integral Thiele--type continued fractions were proposed in \cite{MakDemk, MakDem2, MakDem4}.
 
 The purpose of this work is the study of interpolation of a functional given on the continual set of nodes by the integral C--type continued fractions. 
 Such integral continued fractions contains the interpolation continued  C--fraction as a particular case, so it is a generalization of one of the types of continued fraction used for interpolation of functions \cite{Pah16_8}.
  
\section{Statemet of the problem} 
Assume that \(x(z),x_i(z) \in C[0,1], i=\overline{0,n},\) are some given functions,  \( x_i(z)\neq x_j(z)\). 
Let   \(F(x(\cdot))\) be a certain functional defined in the space of piecewise continuous functions \(Q [0,1]\). 
First we define continual nodes
\begin{equation}\label{MP0}
x^0(z)=x_0(z),\quad x^i(z,\xi)=x_0(z)+H(z-\xi)(x_{i}(z)-x_0(z)),\qquad i=\overline{1,n}. 
\end{equation}
where  \(0\leq \xi\leq 1,\) and \(H(\cdot)\) is the Heaviside function.
 
Let  \(\mathbf{b}_0,\mathbf{a}_i,\mathbf{b}_i,i=\overline{1,n},\) are  numbers, functions, operators, functionals etc. We denote a finite continued fraction  \cite{Jons}
\[
D_n=\mathbf{b}_0+\cfrac{\mathbf{a}_1}{\mathbf{b}_1+\cfrac{\mathbf{a}_2}{\mathbf{b}_2+\genfrac{}{}{0pt}{0}{}{\ddots\genfrac{}{}{0pt}{0}{}{+\cfrac{\mathbf{a}_n}{\mathbf{b}_n}}}}},
\]
as 
\[
D_n=\mathbf{b}_0+\cfrac{\mathbf{a}_1}{\mathbf{b}_1}\Plus \cfrac{\mathbf{a}_2}{\mathbf{b}_2}\Plus\Lcdots \Plus \cfrac{\mathbf{a}_n}{\mathbf{b}_n}.
\] 

Let \(\Rc \subset \Cbb \) be a compact,
\(\Zc = \{x_i \,: \, x_i \in \Rc, x_i \ne x_j, \, i, j = \overline {0, n} \} \) and the function 
\(f \in \mathbf{C} (\Rc) \) is defined by its values at the points of the set \(\Zc \), \(y_i = f (x_i), \, i = \overline{0,n}.\)  The function \(f \) can be interpolated by the set of values \(\{y_i \} \) in different ways, for example, polynomials, splines and continued fractions. A problem of function interpolation   by interpolation continued  C--fraction (C--ICF) 
\begin{equation}\label{MP16}
D^{(c)}_n(x)=a^{(c)}_0+\frac{a^{(c)}_1(x-x_{0})}{1}\Plus \frac{a^{(c)}_2(x-x_{1})}{1}\Plus \Lcdots \Plus \frac{a^{(c)}_n(x-x_{n-1})}{1}
\end{equation}
investigated in monograph  \cite{Pah16_8}.
The coefficients of C--ICF are determined through the interpolation nodes \(\Zc \) and the set of values of the function \(\{y_i \} \) by means of a finite-continued fraction recurrence
\begin{equation}\label{MP17}
\begin{split}
a^{(c)}_k&=\frac{1}{x_k-x_{k-1}}\left(-1+\frac{a^{(c)}_{k-1}(x_k-x_{k-2})}{-1}\Plus\!  
\Lcdots\! \Plus \frac{a^{(c)}_{2}(x_k-x_{1})}{-1}\Plus  \frac{a^{(c)}_1(x_k-x_0)}{y_k-y_0}\right),  k=\overline{2,n}, \\
a^{(c)}_0&=y_0, \quad a^{(c)}_1=\frac{y_1-y_0}{x_1-x_0}.
\end{split}
\end{equation}
If all the interpolation nodes \(x_i, i = \overline {0,n}, \) tend  to the same value \(x_ * \in \Rc \), then C--ICF \eqref{MP16}
transforms into a regular continued  C--fraction that will correspond to the power series for \(f \) around the 
point  \(x _* \).

Consider the set of integral continued fractions (ICF) of the form
\[
Q_n(x(\cdot),\xi)=a_0+\cfrac{\int\limits_0^1 a_1(z_1)[x(z_1)-x_0(z_1)] dz_1}{1}\Plus \cfrac{\int\limits_0^1 a_2(z_2)[x(z_2)-x^1(z_2,\xi)]  dz_2}{1}\Plus\Lcdots \Plus 
\]
\begin{equation}\label{MP1}
\Plus \cfrac{\int\limits_0^1 a_{n-1}(z_{n-1})[x(z_{n-1})-x^{n-2}(z_{n-1},\xi)] dz_{n-1}}{1}\Plus \cfrac{\int\limits_0^1 a_n(z_n)[x(z_n)-x^{n-1}(z_n,\xi)] dz_n}{1}\, ,
\end{equation}
where  \(a_0,a_1(z),a_2(z),\dots,a_n(z)\) are certain kernels. 

We formulate an interpolation problem as follows:
Inside the set of ICF defined by \eqref {MP1} find a integral continued fraction which  satisfies the interpolation conditions at continual nodes \eqref{MP0}
\begin{equation}\label{MP2}
F(x_0(\cdot))=Q_n(x_0(\cdot),\xi), \quad
F(x^i(\cdot,\xi))=Q_n(x^i(\cdot,\xi),\xi), \qquad i=\overline{1,n}, \quad \forall\, \xi \in [0,1]. 
\end{equation}
Such ICF will include C--ICF  \eqref{MP16} as a partial case.
 Such an ICF is called an integral interpolation continued C--fraction (C--IICF).
  
\section{Interpolation of functional by integral interpolation continued  C--fraction} 
Define the kernels \(a_0, a_1 (z), \dots, a_n (z) \) from the condition that C--IICFL \eqref {MP1} satisfies \eqref {MP2}. Note that from \eqref {MP1}, \eqref {MP2} you can directly get the expression
\[
Q_n(x^k(\cdot,\xi),\xi)=a_0+\cfrac{\int\limits_0^1 a_1(z_1)[x^k(z_1,\xi)-x_0(z_1)] dz_1}{1}\Plus \cfrac{\int\limits_0^1 a_2(z_2)[x^k(z_2,\xi)-x^1(z_2,\xi)]  dz_2}{1}\Plus\Lcdots \Plus \]
\begin{equation}\label{MP3}
\Plus \cfrac{\int\limits_0^1 a_k(z_{k})[x^k(z_{k},\xi)-x^{k-1}(z_{k},\xi)] dz_{k}}{1}\, , \qquad k=\overline{0,n}.
\end{equation}

\begin{theorem} 
	Let the functional \(F(x(\cdot)) \) be  \((n-1)\)--times Gateaux differentiable and the following formulas are valid.
	In order that C--IICF \eqref{MP1} to satisfy the continual interpolation conditions \eqref{MP2} it is necessary that its kernels are determined by the formulas
\begin{equation}\label{MP4}
\begin{split}
&a_k(\xi)=\frac{-1}{x_k(\xi)-x_{k-1}(\xi)} \frac{d 
	}{d \xi} \Bigg( \cfrac{\int\limits_0^1 a_{k-1}(z_{k-1})[x^k(z_{k-1},\xi)-x^{k-2}(z_{k-1},\xi)] d z_{k-1}}{-1}\Plus  \\
&\Plus \cfrac{\int\limits_0^1 a_{k-2}(z_{k-2})[x^k(z_{k-2},\xi)\!-\!x^{k-3}(z_{k-2},\xi)] d z_{k-2}}{-1}\Plus\!
\Lcdots\! \Plus
\cfrac{\int\limits_0^1 a_{2}(z_2)[x^k(z_2,\xi)\!-\!x^{1}(z_2,\xi)] d z_2}{-1}\!\Plus
 \\
& \Plus
\cfrac{\int\limits_0^1 a_1(z_1)[x^k(z_1,\xi)-x_0(z_1)] d z_1}{F(x^k(\cdot,\xi))-F(x_0(\cdot))}\Bigg) ,\qquad k=\overline{2,n},
\\
&a_0= F(x_0(\cdot)), \; a_1(\xi)=\frac{-1}{x_1(\xi)-x_0(\xi)} \, \frac{d}{d \xi} F(x^1(\cdot,\xi)).
\end{split}
\end{equation}
\end{theorem}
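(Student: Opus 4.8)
The plan is to treat \eqref{MP4} as the necessity half of the statement: I assume the interpolation conditions \eqref{MP2} hold, so that the truncated representation \eqref{MP3} is at my disposal, and I extract the kernels level by level, the formula for $a_k$ emerging when the continued fraction in \eqref{MP3} is algebraically inverted down to its bottom floor and then differentiated in $\xi$.

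First I would isolate the two structural facts behind \eqref{MP3}. Fix $k$ and write $N_m=\int_0^1 a_m(z)\,[x^k(z,\xi)-x^{m-1}(z,\xi)]\,dz$ for the $m$-th numerator evaluated at $x^k(\cdot,\xi)$. Definition \eqref{MP0} gives, for every $1\le m\le k+1$, the identity $x^k(z,\xi)-x^{m-1}(z,\xi)=H(z-\xi)\,(x_k(z)-x_{m-1}(z))$ (with the convention $x_0$ at $m=1$); in particular the $(k+1)$-st numerator equals $\int_0^1 a_{k+1}(z)\,[x^k-x^k]\,dz=0$, and since a vanishing numerator annihilates its floor together with all lower ones, this is exactly why \eqref{MP1} collapses to \eqref{MP3}. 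The extreme cases then dispatch the last line of \eqref{MP4}: at $k=0$ the fraction is empty, forcing $a_0=F(x_0(\cdot))$, while at $k=1$ the single floor reads $F(x^1(\cdot,\xi))-a_0=\int_\xi^1 a_1(z)(x_1(z)-x_0(z))\,dz$, whose $\xi$-derivative gives $a_1(\xi)$ at once.

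The core of the argument is the inversion of \eqref{MP3} for general $k$, carried out by induction on $k$ with $a_0,\dots,a_{k-1}$ already known. I introduce the forward tails $T_j$, defined from the bottom by $T_{k-1}=N_k$ and $T_{j}=N_{j+1}/(1+T_{j+1})$, so that $T_0=F(x^k(\cdot,\xi))-a_0$ reproduces \eqref{MP3}, together with the reversed quantities read off from the inside of the bracket in \eqref{MP4}: $V_1=N_1/\big(F(x^k(\cdot,\xi))-F(x_0(\cdot))\big)$ and $V_j=N_j/(-1+V_{j-1})$ for $j\ge2$. A one-line induction then yields the collapse identity $V_j=1+T_j$: since $F(x_0(\cdot))=a_0$ one has $V_1=N_1/T_0=1+T_1$, and if $V_{j-1}=1+T_{j-1}$ then $V_j=N_j/(-1+V_{j-1})=N_j/T_{j-1}=1+T_j$. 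This identity is the step I expect to be the main obstacle, as it is where the whole nested fraction telescopes; everything afterward is bookkeeping.

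Evaluating the identity at $j=k-1$ gives $V_{k-1}=1+T_{k-1}=1+N_k$, and $V_{k-1}$ is precisely the continued fraction inside the parentheses of \eqref{MP4}. Thus that bracket equals $1+\int_0^1 a_k(z)\,H(z-\xi)\,(x_k(z)-x_{k-1}(z))\,dz=1+\int_\xi^1 a_k(z)(x_k(z)-x_{k-1}(z))\,dz$ by the Heaviside identity above. Differentiating in $\xi$ and using the fundamental theorem of calculus collapses the integral to $-a_k(\xi)\,(x_k(\xi)-x_{k-1}(\xi))$, and dividing by $-(x_k(\xi)-x_{k-1}(\xi))$ delivers \eqref{MP4}. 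Finally, the $(n-1)$-times Gateaux differentiability of $F$ is what makes all these $\xi$-derivatives legitimate: because $\partial_\xi x^k(\cdot,\xi)$ is concentrated at the moving jump $z=\xi$, the derivative $\frac{d}{d\xi}F(x^k(\cdot,\xi))$—and the higher-order derivatives inherited through $a_1,\dots,a_{k-1}$ along the recursion—must be understood as Gateaux derivatives, whose existence the hypothesis supplies.
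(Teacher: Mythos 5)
Your proof is correct and takes essentially the same route as the paper: the paper likewise derives \eqref{MP4} by ``sequentially inverting the continued fraction'' \eqref{MP3} down to its bottom floor (arriving at $1+\int_\xi^1 a_k(z)[x_k(z)-x_{k-1}(z)]\,dz$ equal to the bracket of \eqref{MP4}) and then differentiating both sides in $\xi$. Your tail identity $V_j=1+T_j$ is just an explicit rendering of that inversion, and your remarks on the collapse of \eqref{MP1} to \eqref{MP3} and on the cases $k=0,1$ match what the paper leaves implicit.
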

\begin{proof}
	For the case \(k = 0,1 \) the formulas are obvious.
	When \(k = m \), from \eqref {MP2}, \eqref {MP3} we get
\[
F(x^m(\cdot,\xi))=a_0+\cfrac{\int\limits_0^1 a_1(z_1)[x^m(z_1,\xi)-x_0(z_1)] dz_1}{1}\Plus \cfrac{\int\limits_0^1 a_2(z_2)[x^m(z_2,\xi)-x^1(z_2,\xi)]  dz_2}{1}\Plus \Lcdots\Plus 
\]
\[
\Plus \cfrac{\int\limits_0^1 a_{m-1}(z_{m-1})[x^m(z_{m-1},\xi)-x^{m-2}(z_{m-1},\xi)] dz_{m-1}}{1}
\Plus \cfrac{\int\limits_0^1 a_m(z_{m})[x^m(z_m,\xi)-x^{m-1}(z_m,\xi)] dz_m}{1}.
\]
By sequentially inverting the continued fraction, we get
\[
\int\limits_{\xi}^1 a_m(z_m)[x^m(z_m,\xi)-x^{m-1}(z_m,\xi)] dz_m+1 \!=\!\cfrac{\int\limits_0^1 a_{m-1}(z_{m-1})[x^m(z_{m-1},\xi)\!-\!x^{m-2}(z_{m-1},\xi)] dz_{m-1}}{-1}\Plus
\]
\[
\Plus \Lcdots\Plus\cfrac{\int\limits_0^1 a_2(z_2)[x^m(z_2,\xi)-x^1(z_2,\xi)]  dz_2}{-1} \Plus \cfrac{\int\limits_0^1 a_1(z_1)[x^m(z_1,\xi)-x_0(z_1)] dz_1}{F(x^m(\cdot,\xi))-F(x_0(\cdot))}. 
\]
The subsequent differentiation of both parts of this relation with respect to the variable \(\xi\) yields formula
 \eqref{MP4}. 
\end{proof}
\begin{theorem}\label{MPTh2}
		Let 
	\begin{equation}\label{MP19}
	F(x(\cdot))=f\big(\int_0^1 x(t) dt\big)
	\end{equation}	
and kernels 	 C--IICF \eqref{MP1} are determined by the formulas \eqref{MP4}. For C--IICF  
to satisfy the interpolation conditions \eqref{MP2} it is sufficient that the function
 \(f(s)\in C^{(n-1)}(-\infty,+\infty)\).
\end{theorem}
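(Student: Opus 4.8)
The plan is to prove sufficiency by reversing the computation of Theorem~1 and then fixing the single integration constant that the reversal leaves free. First I would exploit the special form \eqref{MP19}. Because the continual nodes satisfy $x^k(z,\xi)-x^{j-1}(z,\xi)=H(z-\xi)\big(x_k(z)-x_{j-1}(z)\big)$ (with $x_0$ in place of $x_{j-1}$ when $j=1$), every integral occurring in \eqref{MP3} and \eqref{MP4} collapses to $\int_\xi^1 a_j(z)\,[x_k(z)-x_{j-1}(z)]\,dz$, and the functional becomes $F(x^k(\cdot,\xi))=f\big(s_0+\varphi_k(\xi)\big)$, where $s_0=\int_0^1 x_0(t)\,dt$ and $\varphi_k(\xi)=\int_\xi^1[x_k(t)-x_0(t)]\,dt$. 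In particular $\varphi_k(1)=0$, so at $\xi=1$ all numerators vanish and both $Q_n(x^k(\cdot,1),1)$ and $F(x^k(\cdot,1))$ equal $a_0=f(s_0)$; this settles $\xi=1$, and the cases $k=0,1$ follow directly from the definitions of $a_0,a_1$.

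For $2\le k\le n$ and $\xi\in[0,1)$ I would invoke the purely algebraic inversion already carried out inside the proof of Theorem~1: provided the intermediate denominators are nonzero, the interpolation identity $Q_n(x^k(\cdot,\xi),\xi)=F(x^k(\cdot,\xi))$ is \emph{equivalent} to
\[
1+\int_\xi^1 a_k(z)\,[x_k(z)-x_{k-1}(z)]\,dz=R_k(\xi),
\]
where $R_k(\xi)$ is exactly the finite continued fraction differentiated in \eqref{MP4} (its last denominator being $F(x^k(\cdot,\xi))-F(x_0(\cdot))$). Differentiating the left-hand side in $\xi$ gives $-a_k(\xi)[x_k(\xi)-x_{k-1}(\xi)]$, which by \eqref{MP4} equals $R_k'(\xi)$. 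Hence the two sides differ by a constant on $[0,1)$, and the whole matter reduces to showing that this constant vanishes, i.e. that $\lim_{\xi\to1^-}R_k(\xi)=1$.

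Evaluating this limit is the main obstacle, because at $\xi=1$ the continued fraction $R_k$ is a nest of $0/0$ indeterminacies: each numerator is $O(1-\xi)$ while the innermost denominator $F(x^k(\cdot,\xi))-F(x_0(\cdot))=f'(s_0)\varphi_k(\xi)+O(\varphi_k^2)$ is also $O(1-\xi)$, so the innermost fraction tends to $a_1(1)/f'(s_0)=1$ (using $a_1(1)=f'(s_0)$), which forces the next denominator $-1+(\cdots)$ to vanish, and so on up the tower. The plan is to resolve these cancellations inductively by a Taylor expansion in $\varepsilon=1-\xi$: expand every numerator and $f(s_0+\varphi_k(\xi))-f(s_0)$ through the required order and check, level by level, that the $\varepsilon$-expansions match, so that each $0/0$ evaluates to the value rendering the next one well defined and ultimately $R_k(\xi)\to1$. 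Carrying the expansion up to the top level $k=n$ needs derivatives of $f$ through order $n-1$; this is the decisive place where the hypothesis $f\in C^{(n-1)}(-\infty,+\infty)$ is consumed, and it simultaneously guarantees that the differentiations prescribed in \eqref{MP4} are legitimate, so that the kernels $a_2,\dots,a_n$ are well-defined continuous functions.

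Finally I would assemble these pieces into an induction on $k$: the base cases $k=0,1$ are verified directly, and for each $k\ge2$ the derivative identity together with the boundary value $\lim_{\xi\to1^-}R_k(\xi)=1$ turns the defining relations \eqref{MP4} into the interpolation condition at the continual node $x^k(\cdot,\xi)$ for every $\xi\in[0,1]$. A secondary point to dispatch along the way is the standing nondegeneracy requirement that the intermediate denominators, in particular $F(x^k(\cdot,\xi))-F(x_0(\cdot))$ for $\xi<1$, do not vanish, which is what makes the inversion used above reversible and hence the equivalence legitimate.
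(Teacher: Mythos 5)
Your proposal is correct and follows essentially the same route as the paper's own proof: there, too, the kernels \eqref{MP4} are substituted into the last denominator and integrated exactly (your ``equal derivatives plus an integration constant'' step is the paper's appeal to the fundamental theorem of calculus, producing \(P_m=1-\lim_{z_m\to1}K_m[z_m,x_m]+K_m[\xi,x_m]\)), so that everything reduces to your boundary limit \(\lim_{\xi\to1^-}R_k(\xi)=1\), which the paper evaluates by resolving the same nested \(0/0\) indeterminacy via iterated L'Hospital and the mean--value theorem (your Taylor expansion in \(1-\xi\) is an equivalent device) --- precisely the point where \(f\in C^{(n-1)}\) is consumed. Your explicit handling of the endpoint \(\xi=1\) and of the nondegeneracy of the intermediate denominators makes tacit assumptions of the paper visible, but does not change the argument.
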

\begin{proof}
Assume that the kernels are determined by the formulas \eqref{MP4}. From \eqref{MP3}  it follows directly that  
\(Q_n(x_0(\cdot),\xi)=F(x_0(\cdot)), \, Q_n(x^1(\cdot,\xi),\xi)=F(x^1(\cdot,\xi)), \; \forall \xi \in [0,1],\)
for   \(k=0,1\).  
When \(k=2\) we obtain  from  \eqref{MP3} the formula
 \begin{equation}\label{MP12}
 Q_n(x^2(\cdot,\xi),\xi)=a_0+\cfrac{\int\limits_0^1 a_1(z_1)[x^2(z_1,\xi)-x_0(z_1)]d z_1}{1+\int\limits_0^1 a_2(z_2)[x^2(z_2,\xi)-x^1(z_2,\xi)]d z_2}.
 \end{equation}
 Next, we substitute value of the kernel  \(a_2 (z_2)\) and find the fraction's denominator
  \[
 P_2=1+\int\limits_0^1 a_2(z_2)[x^2(z_2,\xi)-x^1(z_2,\xi)]d z_2=
 1-\int\limits_\xi^1   
 \frac{d}{d z_2} \Bigg(\cfrac{\int\limits_{z_2}^1  a_1(z_1)[x_2(z_1)-x_0(z_1)]d z_1}{F(x^2(\cdot,\xi))-F(x_0(\cdot))}\Bigg)\, d z_2=
 \]
 \[
 =1-\lim_{z_2 \to 1} K_2[z_2,x_2]+\cfrac{\int\limits_\xi^1  a_1(z_1)[x_2(z_1)-x_0(z_1)]d z_1}{F(x^2(\cdot,\xi))-F(x_0(\cdot))}, 
 \] 
 where
 \[ 
 K_2[z_2,x_2]= \cfrac{\int\limits_{z_2}^1  a_1(z_1)[x_2(z_1)-x_0(z_1)]d z_1}{F(x^2(\cdot,z_2))-F(x_0(\cdot))}.
 \]
 By applying the  L'Hospital rule and considering \eqref{MP4}, \eqref {MP19}, we get
  \[
\lim_{z_2 \to 1}  K_2[z_2,x_2]=-\lim_{z_2 \to 1}\cfrac{ a_1(z_2)[x_2(z_2)-x_0(z_2)]}{\cfrac{d F(x^2(\cdot,z_2))}{d z_2}} =
\lim_{z_2\to 1}\cfrac{a_1(z_2)}{a_1(z_2)\big|_{x_1(z_2)\to x_2(z_2)}}=
\]
\begin{equation}\label{MP15}
 =\lim_{z_2 \to 1} \cfrac{x_2(z_2)-x_0(z_2)}{x_1(z_2)-x_0(z_2)}\,
 \cfrac{f'\Big(\int\limits_0^1 x^1(t,z_2)dt\Big)(x_0(z_2)-x_1(z_2))} {f'\Big(\int\limits_0^1 x^2(t,z_2))dt\Big)(x_0(z_2)-x_2(z_2)) }=1, \qquad \forall\, x_2(z_2).
 \end{equation}
  We substitute the calculated value of the limit  \(K_2 [z_2, x_2] \) into the expression for \(P_2 \), and then substitute the result into  \eqref {MP12}.
 Consequently, we obtain \[(Q_n(x^2(\cdot,\xi),\xi)=F(x^2(\cdot,\xi)).\] 
 Let \(k=3\). From \eqref{MP3}  we get 
\[
Q_n(x^3(\cdot,\xi),\xi)=F(x_0(\cdot))+\cfrac{\int\limits_0^1 a_1(z_1)[x^3(z_1,\xi)-x_0(z_1)] dz_1}
{1}\Plus  
\]
\[
\Plus\cfrac{\int\limits_0^1 a_2(z_2)[x^3(z_2,\xi)-x^1(z_2,\xi)] dz_2}{1}\Plus\cfrac{\int\limits_0^1 a_3(z_3)[x^3(z_3,\xi)-x^2(z_3,\xi)] dz_3}{1}.
\]
We substitute the value of the kernel \(a_3(z_3)\) and evaluate the last denominator to get
\[
P_3=1+\int\limits_0^1 a_3(z_3)[x^3(z_3,\xi)-x^2(z_3,\xi)]dz_3= 1-\int\limits_\xi^1 \frac{d}{d z_3} \left\{
\cfrac{\int\limits_0^1 a_2(z_2)[x^3(z_2,z_3)-x^1(z_2,z_3)]d z_2}{-1}\Plus  \right. 
\]
\[
\left.
\Plus\cfrac{\int\limits_0^1 a_1(z_1)[x^3(z_1,z_3)-x_0(z_1)]d z_1}{F(x^3(\cdot,z_3))-F(x_0(\cdot))} \right\}
dz_3=1-\lim_{z_3 \to 1} K_3[z_3,x_3]+\cfrac{\int\limits_{\xi}^1 a_2(z_2)[x_3(z_2)-x_1(z_2)]d z_2}{-1} \Plus
\]
\[
\Plus\cfrac{\int\limits_{\xi}^1 a_1(z_1)[x_3(z_1)-x_0(z_1)]d z_1}{F(x^3(\cdot,z_3))-F(x_0(\cdot))},
\]
where 
\[
 K_3[z_3,x_3]=\frac{\int\limits_{z_3}^1 a_2(z_2)[x_3(z_2)-x_1(z_2)]d z_2}{-1} \Plus \cfrac{\int\limits_{z_3}^1 a_1(z_1)[x_3(z_1)-x_0(z_1)]d z_1}{F(x^3(\cdot,z_3))-F(x_0(\cdot))}.  
\]
By applying the L'Hospital rule, taking into account 
\eqref {MP15}, we obtain
\[
 \lim_{z_3 \to 1} K_3[z_3,x_3]= \lim_{z_3 \to 1} 
 \cfrac{a_2(z_3)[x_3(z_3)-x_1(z_3)]}{\cfrac{a_1(z_3)[x_3(z_3)-x_ 0(z_3)]}{F(x^3(\cdot,z_3))-F(x_0(\cdot))}+\cfrac{\int\limits_{z_3}^1 a_1(z_1)[x_3(z_1)-x_0(z_1)]d z_1}{(F(x^3(\cdot,z_3))-F(x_0(\cdot)))^2}\, \cfrac{d}{d z_3} F(x^3(\cdot,z_3))}= 
\] 

\[
=\lim_{z_3 \to 1} \cfrac{x_3(z_3)\!-\!x_1(z_3)}{x_2(z_3)\!-\!x_1(z_3)}\,
\cfrac{a_1(z_3)[x_2(z_3)-x_0(z_3)]\!+\!\cfrac{d}{d z_3}F(x^2(\cdot,z_3))}{a_1(z_3)[x_3(z_3)\!-\!x_0(z_3)] +\cfrac{d}{d z_3}F(x^3(\cdot,z_3))}\, \cfrac{F(x^3(\cdot,z_3))-F(x_0(\cdot))}{F(x^2(\cdot,z_3))-F(x_0(\cdot))}=
\]
\[
=\lim_{z_3 \to 1} \cfrac{x_3(z_3)-x_1(z_3)}{x_2(z_3)-x_1(z_3)}
\,\cfrac{\cfrac{d}{d z_3} F(x^3(\cdot,z_3))}{\cfrac{d}{d z_3} F(x^2(\cdot,z_3))} \times  \]
\[
\times \cfrac{[x_2(z_3)-x_0(z_3)]\cfrac{d}{d z_3} F(x^1(\cdot,z_3))-[x_1(z_3)-x_0(z_3)]\cfrac{d}{d z_3} F(x^2(\cdot,z_3))}{[x_3(z_3)-x_0(z_3)]\cfrac{d}{d z_3} F(x^1(\cdot,z_3))-[x_1(z_3)-x_0(z_3)]\cfrac{d}{d z_3} F(x^3(\cdot,z_3))}.  
\]
Using \eqref {MP19} and the mean--value theorem, we obtain
\[
\lim_{z_3 \to 1}K_3[z_3,x_3]=\lim_{z_3 \to 1}  \cfrac{x_3(z_3)-x_1(z_3)}{x_2(z_3)-x_1(z_3)}
\, \cfrac{f''(\theta_1)\Big(\int\limits_0^1 x^1(s,z_3)ds -\int\limits_0^1 x^2(s,z_3)ds\Big)}{f''(\theta_2)\Big(\int\limits_0^1 x^1(s,z_3)ds -\int\limits_0^1 x^3(s,z_3)ds\Big)}=
\]
\[
=\lim_{z_3 \to 1}\cfrac{a_2(z_3)}{a_2(z_3)\big|_{x_2(z_3)\to x_3(z_3)}}=1, \qquad \forall\, x_3(z_3),
\]
where 
\[
\theta_1=\int\limits_0^1 x^1(s,z_3)ds+\tau_1 \Big(\int\limits_0^1 x^2(s,z_3)ds-\int\limits_0^1 x^1(s,z_3)ds\Big), \]
\[
\theta_2=\int\limits_0^1 x^1(s,z_3)ds+\tau_2 \Big(\int\limits_0^1 x^3(s,z_3)dsm-\int\limits_0^1 x^1(s,z_3)ds\Big), \qquad \tau_1,\tau_2 \in (0,1).
\]
Using equation\eqref{MP3} in the general case for \(k = m \) we obtain
\[
Q_n(x^m(\cdot,\xi),\xi)=F(x_0(\cdot))+\frac{\int\limits_0^1 a_1(z_1)[x^m(z_1,\xi)\!-\!x_0(z_1)]d z_1}{1}\Plus \frac{\int\limits_0^1 a_2(z_2)[x^m(z_2,\xi)\!-\!x^1(z_2,\xi)]d z_2}{1}\Plus
\]
\begin{equation}\label{MP11}
\Plus \Lcdots \Plus \frac{\int\limits_0^1 a_{m-1}(z_{m-1})[x^m(z_{m-1},\xi)-x^{m-2}(z_{m-1},\xi)]d z_{m-1}}{1+\int\limits_0^1 a_m(z_m)[x^m(z_m,\xi)-x^{m-1}(z_m,\xi)]d z_m}\, .
\end{equation}
Which permits us to find the value of the last denominator
\[
P_m=1+\int\limits_0^1 a_m(z_m)[x^m(z_m,\xi)-x^{m-1}(z_m,\xi)]d z_m.
 \]
To do that we use the values of \(a_m (z_m)\) from \eqref{MP4}. Then 
\[
P_m=1-\int\limits_\xi^1 \frac{d}{d z_m}\left( \cfrac{\int\limits_0^1 a_{m-1}(z_{m-1})[x^m(z_{m-1},z_m)\!-\!x^{m-2}(z_{m-1},z_m)] dz_{m-1}}{-1}\Plus\right.
\]
\[
\left.
\Plus \Lcdots\Plus\cfrac{\int\limits_0^1 a_2(z_2)[x^m(z_2,z_m)-x^1(z_2,z_m)]  dz_2}{-1} \Plus \cfrac{\int\limits_0^1 a_1(z_1)[x^m(z_1,z_m)-x_0(z_1)] dz_1}{F(x^m(\cdot,z_m))-F(x_0(\cdot))}\right) d z_m=
\]
\[
=1-\lim_{z_m \to 1}
K_m[z_m,x_m]+  
\cfrac{\int\limits_\xi^1 a_{m-1}(z_{m-1})[x_m(z_{m-1})\!-\!x_{m-2}(z_{m-1})] dz_{m-1}}{-1}\Plus
\]
\[
\Plus \Lcdots\Plus\cfrac{\int\limits_\xi^1 a_2(z_2)[x_m(z_2)-x_1(z_2)]  dz_2}{-1} \Plus \cfrac{\int\limits_\xi^1 a_1(z_1)[x_m(z_1)-x_0(z_1)] dz_1}{F(x^m(\cdot,\xi))-F(x_0(\cdot))}\, ,
\]
where
\[
K_m[z_m,x_m]=
  \cfrac{\int\limits_{z_m}^1 a_{m-1}(z_{m-1})[x_m(z_{m-1})-x_{m-2}(z_{m-1})] dz_{m-1}}{-1}\Plus\Lcdots\Plus  
\]
\[
 \Plus \cfrac{\int\limits_{z_m}^1 a_2(z_2)[x_m(z_2)\!-\!x_1(z_2)]  dz_2}{-1} \Plus\cfrac{\int\limits_{z_m}^1 a_1(z_1)[x_m(z_1)-x_0(z_1)] dz_1}{F(x^m(\cdot,z_m))-F(x_0(\cdot))} . 
\]
Similarly, one can prove that
\[ 
\lim\limits_{z_m \to 1} K_m[z_m,x_m]=\lim_{z_m\to 1}\cfrac{a_{m-1}(z_m)}{a_{m-1}(z_m)\big|_{x_{m-1}(z_m)\to x_m(z_m)}}=1, \qquad \forall\, x_m(z_m).
\] 
By substituting the obtained value of \(P_m \) into \eqref {MP11} we get the sought for result, namely
 \[
 Q_n(x^m(\cdot,\xi),\xi)=F(x^m(\cdot,\xi)).
 \] 
 For brevity we omit the case \(k > 3\), which can be proved similarly.
\end{proof}
\begin{remark}
	Conditions \eqref{MP19} formulation of Theorem 
	\ref{MPTh2} can be changed to a more general, when the results of \cite{AvSm,MakHlobKashMyh} are employed.
\end{remark}
\begin{theorem}\label{MPTh3}
Assume the conditions of Theorem \ref{MPTh2}  are valid and  \(\xi=0, x(z)\equiv x, x_i(z)\equiv x_i, i=\overline{0,n}.\) Then  C--IICF \eqref{MP1} 
will coincide with the  C--ICF  \eqref{MP16}.	
\end{theorem}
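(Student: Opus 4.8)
The plan is to substitute the degenerate data $\xi=0$, $x(z)\equiv x$, $x_i(z)\equiv x_i$ directly into \eqref{MP1}--\eqref{MP4} and to check, coefficient by coefficient, that the resulting object is literally \eqref{MP16} with the scalars $a_k^{(c)}$ given by \eqref{MP17}. First I would record what the continual nodes \eqref{MP0} collapse to: for $\xi=0$ one has $H(z-0)=1$ on $(0,1]$, so $x^i(z,0)=x_i$ for almost every $z$, whence $\int_0^1 x^i(t,0)\,dt=x_i$ and $F(x^i(\cdot,0))=f(x_i)=y_i$. With the constant data each integrand in \eqref{MP1} becomes independent of the integration variable, since $x(z)-x^{k-1}(z,0)=x-x_{k-1}$, so that
\[
\int_0^1 a_k(z)\,[x-x_{k-1}]\,dz=(x-x_{k-1})\,\tilde a_k,\qquad \tilde a_k:=\int_0^1 a_k(z)\,dz .
\]
After this reduction $Q_n(x,0)$ is, term by term, the finite continued fraction \eqref{MP16} with $a_k^{(c)}$ replaced by $\tilde a_k$. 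Thus the theorem reduces to the single identity $\tilde a_k=a_k^{(c)}$, which I would prove by induction on $k$.

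The base cases are immediate. One has $a_0=F(x_0(\cdot))=f(x_0)=y_0$, and evaluating \eqref{MP4} for $k=1$ with $\int_0^1 x^1(t,\xi)\,dt=x_0+(x_1-x_0)(1-\xi)$ gives $a_1(\xi)=f'\!\big(x_0+(x_1-x_0)(1-\xi)\big)$, whose integral over $\xi\in[0,1]$ is $\tfrac{f(x_1)-f(x_0)}{x_1-x_0}=\tfrac{y_1-y_0}{x_1-x_0}=a_1^{(c)}$. For the inductive step I would use that, under the specialization, each numerator integral inside \eqref{MP4} reduces to $(x_k-x_{i-1})\int_\xi^1 a_i(z)\,dz$, the lower limit $\xi$ coming from the Heaviside factor, so that the bracket differentiated in \eqref{MP4} is exactly the continued fraction $K_k[\xi,x_k]$ from the proof of Theorem~\ref{MPTh2}. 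Since $x_k(\xi)-x_{k-1}(\xi)\equiv x_k-x_{k-1}$ is constant, the fundamental theorem of calculus yields
\[
\tilde a_k=\int_0^1 a_k(\xi)\,d\xi=\frac{-1}{x_k-x_{k-1}}\big(K_k[1,x_k]-K_k[0,x_k]\big).
\]

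The two boundary values are precisely what make the identity close up. At $\xi=0$ one has $\int_0^1 a_i(z)\,dz=\tilde a_i=a_i^{(c)}$ by the induction hypothesis and $F(x^k(\cdot,0))-F(x_0(\cdot))=y_k-y_0$, so $K_k[0,x_k]$ is exactly the finite continued fraction standing inside the parentheses of \eqref{MP17}. At $\xi=1$ the value $K_k[1,x_k]=\lim_{z_k\to1}K_k[z_k,x_k]=1$ is precisely the limit already established in the proof of Theorem~\ref{MPTh2}. Substituting these two values turns the displayed FTC relation verbatim into \eqref{MP17}, i.e. $\tilde a_k=a_k^{(c)}$, completing the induction and with it the proof that $Q_n(x,0)=D^{(c)}_n(x)$.

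The one delicate point, which I expect to be the main obstacle, is the endpoint $\xi=1$: there every numerator integral and the denominator $F(x^k(\cdot,1))-F(x_0(\cdot))$ vanish simultaneously, so $K_k[1,x_k]$ is a genuine nested $0/0$ form and must be read as the limit $\lim_{z_k\to1}K_k[z_k,x_k]$ rather than a naive substitution. Its evaluation to $1$ is exactly the content borrowed from Theorem~\ref{MPTh2}; everything else is the bookkeeping of the Heaviside collapse and the fundamental theorem of calculus.
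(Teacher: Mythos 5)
Your proof is correct, but it takes a genuinely different route from the paper's. The paper, having the conclusion of Theorem~\ref{MPTh2} in hand, never touches the kernel formulas \eqref{MP4} in the proof of Theorem~\ref{MPTh3}: it evaluates the specialized fraction \eqref{MP8} at each node, where the interpolation condition \eqref{MP2} gives \(f(x_k)=a_0+\frac{(x_k-x_0)\tilde a_1}{1}\Plus\Lcdots\Plus\frac{(x_k-x_{k-1})\tilde a_k}{1}\), and then performs a purely algebraic inversion of this finite continued fraction (its \eqref{MP10}--\eqref{MP21}) to obtain the recurrence for \(\tilde a_k\), which coincides with \eqref{MP17}; no differentiation, no limits, no L'Hospital appear at this stage. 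You instead bypass the interpolation property and integrate \eqref{MP4} over \(\xi\) by the fundamental theorem of calculus, closing the identity with the two boundary values \(K_k[0,x_k]\) (which is the bracket of \eqref{MP17} by the induction hypothesis) and \(K_k[1,x_k]=1\) (the nested \(0/0\) limit). What each buys: the paper's route is lighter and depends only on the \emph{statement} of Theorem~\ref{MPTh2}, whereas yours re-imports an internal lemma of its \emph{proof} --- the evaluation \(\lim_{z_m\to 1}K_m[z_m,x_m]=1\) for all \(m\), which the paper verifies in detail only for \(m=2,3\) and extends with ``similarly'' --- so your argument inherits that unproved general case, though you correctly flag the endpoint \(\xi=1\) as the only delicate point and treat it exactly as the paper treats \(P_m\) inside the proof of Theorem~\ref{MPTh2}. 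In exchange, your computation is more informative: it shows how the recurrence \eqref{MP17} emerges directly from integrating the kernels (your explicit \(a_1(\xi)=f'\big(x_0+(x_1-x_0)(1-\xi)\big)\) and its integral is a nice concrete instance absent from the paper) rather than from the interpolation property, and the two derivations share identical base cases and induction structure, so either can legitimately replace the other.
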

\begin{proof}
Taking into account the conditions of the theorem 
\ref{MPTh3}, we obtain the following form of a continued fraction \eqref{MP1}  
\begin{equation}\label{MP8}
Q_n(x)=a_0+\cfrac{(x-x_0)\int\limits_0^1 a_1(z_1)dz_1}{1}\Plus \cfrac{(x-x_1)\int\limits_0^1 a_2(z_2) dz_2}{1}\Plus\Lcdots \Plus \cfrac{(x-x_{n-1})\int\limits_0^1 a_n(z_n) dz_n}{1}\, .
\end{equation}
From the interpolation condition \eqref{MP2} we get
\[
a_0\!=\!f(x_0),\,  \tilde{a}_1\!=\!\int\limits_0^1 a_1(z_1) dz_1 =\frac{f(x_1)\!-\!f(x_0)}{x_1-x_0},\, \tilde{a}_2\!=\!\int\limits_0^1 a_2(z_2) dz_2=\frac{1}{x_2\!-\!x_1}\bigg(-1+\frac{\tilde{a}_1(x_2-x_0)}{f(x_2)\!-\!f(x_0)}\bigg).
\]
Using the mathematical induction we intend to show that the formula 
\[
\tilde{a}_m=\int\limits_0^1 a_m(z_m) dz_m= \frac{1}{x_m-x_{m-1}}\bigg(-1+\frac{\tilde{a}_{m-1}(x_m-x_{m-2})}{-1}\Plus\frac{\tilde{a}_{m-2}(x_m-x_{m-3})}{-1}\Plus 
\]
\begin{equation}\label{MP9}
\Plus\Lcdots \Plus\frac{\tilde{a}_{2}(x_m-x_{1})}{-1}
\Plus\frac{\tilde{a}_{1}(x_m-x_{0})}{f(x_m)-f(x_0)}
\bigg)\, 
\end{equation}
holds true for \(m = \overline {3, n}\).

For \(m = 1,2\) formula \eqref {MP9} is obviously valid.
Let's assume that it is valid for \(m  = \overline {1, k-1}. \) Then for \( m = k \) the continued fraction \eqref {MP8} can be written as
\begin{equation}\label{MP10}
f(x_k)=a_0+\cfrac{(x_k-x_0)\tilde{a}_1}{1}\Plus 
\Lcdots \Plus\cfrac{(x_k-x_{k-2})\tilde{a}_{k-1}}{1}\Plus \cfrac{(x_k-x_{k-1})\tilde{a}_k(z_k)}{1}\, .
\end{equation}
By inverting the continued fraction \eqref{MP10}, we get
\[
\tilde{a}_k=\int\limits_0^1 a_k(z_k) dz_k= \frac{1}{x_k-x_{k-1}}\bigg(-1+
\frac{\tilde{a}_{k-1}(x_k-x_{k-2})}{-1}
\Plus\frac{\tilde{a}_{k-2}(x_k-x_{k-3})}{-1}
\Plus\Lcdots \Plus
\]
\begin{equation}\label{MP21}
\Plus \frac{\tilde{a}_{2}(x_k-x_{1})}{-1}
\Plus\frac{\tilde{a}_{1}(x_k-x_{0})}{f(x_k)-f(x_0)}
\bigg).
\end{equation}
By its form, the right-hand side of \eqref {MP21} coincides with the right-hand side of \eqref {MP17} albeit with different notation, in addition to that, the initial conditions for both formulas are the same.
Consequently, formula \eqref {MP9} is the same as formula \eqref {MP17}.
\end{proof}


\begin{thebibliography}{99}
\bibitem{MakHlob}
 Makarov, V.L. \& Khlobistov, V.V. (1999) Fundamentals of the theory of polynomial opera\-tor inter\-polation.  – Kyev: Inst. of Mathematics NAS of Ukraine (in Russian). 
\bibitem{MakHlobYan}
Makarov, V.L., Khlobistov, V.V. \& Yanovich, L.A. (2000) Interpolation of operators. – Kyev: Nauko\-va dumka (in Russian).
\bibitem{Jons}
Jones, W. B. \&  Thron W. J.(1980) Continued Fractions. Analytic Theory and Applications. Encyclopedia of Mathematics and its Applications.
\bibitem{Skor}
Skorobohat'ko, V.Ya. (1983) Theory of Branching Continued Fractions and Its Application in Computational Mathematics, Moscow: Nauka (in Russian).
\bibitem{Syv}
Syvavko, M.S. (1994) Integral continued fractions. Kyev: Naukova dumka (in Ukrainian).
\bibitem{Myh}
Mykhal’chuk, B.R. (1999) 
Interpolation of nonlinear functionals by integral continued  fractions. Ukr. Mat. Zh.Vol. 51, No 3. pp. 364-375 (in Ukrainian).
\bibitem{MakHlobMyh}
Makarov, V.L., Khlobistov, V.V. \& Mykhal’chuk, B.R.
(2003) Interpolation integral continued fractions. Ukr. Mat.Zh. Vol.  55, No 4. 479-488 (in Ukrainian).
\bibitem{MakDem1}
Makarov, V.L. \& Demkiv, I.I.(2008) A new class of interpolation integral continued fractions. Dopov. Nac. akad. nauk Ukr., No. 11, pp. 17–23 (in Ukrainian).
\bibitem{MakHlobDem}
 Makarov, V.L., Khlobistov, V.V. \& Demkiv, I.I.  (2008) Interpolation integral operator fractions in a Banach space. Dopov. Nac. akad. nauk Ukr., No. 3, pp. 17–23 (in Ukrainian).
\bibitem{MakDemk}
Makarov, V. L. \& Demkiv, I. I. (2014) Interpolating integral continued fraction of Theile type. Mat. Metody Fiz.--Mekh. Polya. Vol. 57, No. 4, pp. 44–50 (in Ukrainian).
\bibitem{MakDem2}
Makarov, V.L. \& Demkiv, I.I.  (2016) An integral interpolation chain fraction of Thiele type. Dopov. Nac. akad. nauk Ukr., No. 1, pp. 12–18 (in Ukrainian). \\
doi: https://doi.org/10.15407/dopovidi2016.01.012
\bibitem{MakDem4}
Makarov, V. L. \& Demkiv, I. I. (2016) Abstract interpolation Thiele-type fraction. Mat. Metody Fiz.--Mekh. Polya. Vol.59, No. 2, pp. 50–57 (in Ukrainian).
\bibitem{Pah16_8}
 Pahirya, M.M. (2016) Approximation of functions by continued fractions. Uzhhorod: Grazda (in Ukrainian).
\bibitem{AvSm} 
Averbukh, V.I. \&  Smolyanov, O.G.
(1967) The theory of differentiation in linear to­pological spaces. Uspehi Mat. Nauk. Vol. 22, No. 6, pp. 201-260 (in Russian).
\bibitem{MakHlobKashMyh}
Makarov, V.L., Khlobystov, V. V., Kashpur, E. F. \&  Mikhal'chuk, B. R. (2003) Integral  Newton-Type Polynomials with Continual Nodes. Ukr. Mat. Zh. Vol. 55, No. 6.
pp. 779-789 (in Ukrainian).
\end{thebibliography}
\end{document}